\documentclass[11pt]{article}

\usepackage[T1]{fontenc}
\usepackage{lmodern}
\usepackage{microtype}
\usepackage[a4paper,margin=1in]{geometry}
\usepackage{amsmath,amssymb,amsthm,mathtools}
\usepackage{enumitem}
\usepackage{booktabs}
\usepackage{xcolor}
\usepackage{hyperref}
\usepackage{bookmark}
\usepackage{tikz}
\usepackage{graphicx}
\usepackage{float}
\usetikzlibrary{decorations.pathreplacing}

\hypersetup{
  colorlinks=true,
  linkcolor=blue!45!black,
  citecolor=blue!45!black,
  urlcolor=blue!45!black,
  pdftitle={Bounded Twin-Width Tournaments are Chi-Bounded},
  pdfauthor={Chaoliang Tang and Junchi Zhang}
}

\setlist[itemize]{leftmargin=1.5em,itemsep=0.15em,topsep=0.3em}

\newtheorem{theorem}{Theorem}[section]
\newtheorem{lemma}[theorem]{Lemma}
\newtheorem{corollary}[theorem]{Corollary}

\newcommand{\tww}{\operatorname{tww}}
\newcommand{\dchi}{\vec{\chi}}
\newcommand{\domega}{\vec{\omega}}
\newcommand{\Inv}{\operatorname{Inv}}
\newcommand{\cF}{\mathcal{F}}
\newcommand{\cC}{\mathcal{C}}

\title{Bounded Twin-Width Tournaments are $\dchi$-Bounded}
\author{%
  Chaoliang Tang\thanks{Fudan University. Emails: \texttt{\{cltang22, jczhang24\}@m.fudan.edu.cn}}
  \and
  Junchi Zhang\footnotemark[1]
}
\date{}

\begin{document}
\maketitle
\vspace{-2.0em}

\begin{abstract}
For a tournament $T$ and a vertex ordering $\prec$, let $T^{\prec}$ be the graph of backward arcs in $\prec$. The diclique number of a tournament is $\domega(T)=\min_{\prec}\omega(T^{\prec})$, and the dichromatic number is $\dchi(T)=\min_{\prec}\chi(T^{\prec})$. We prove a mixed parameter transfer theorem: for all $k$ and $r$, if $\tww(T)\le k$ and $\omega(T^{\prec})\le r$, then the ordered twin-width of $(T^{\prec},\prec)$ is bounded by a function of $k$ and $r$. The proof combines the regular-semigrid theorem for ordered graphs with permutation-encoding obstructions to bounded twin-width in tournaments. Together with polynomial $\chi$-boundedness of graphs of bounded twin-width, this implies that tournaments of bounded twin-width are $\dchi$-bounded by $\domega$, resolving a conjecture of Aboulker, Aubian, Charbit, and Lopes.
\end{abstract}

\section{Introduction}
Coloring directed graphs naturally leads to the dichromatic number, introduced by Neumann-Lara~\cite{NeumannLara}, where independent sets are replaced by acyclic sets. For tournaments, acyclic sets are exactly transitive subtournaments, and this viewpoint has led to a rich coloring theory, including the characterization of heroes by Berger et al.~\cite{BCCHFLSST}. Another useful connection with undirected graphs comes from backward graphs: a vertex ordering together with its backward graph completely determines the tournament, while suitable orderings translate questions on dichromatic number into ordinary graph coloring. This perspective has also been used in structural problems on tournaments, for instance in work on the strong Erd\H{o}s--Hajnal property~\cite{CSSS} and on clique-type parameters defined via backward graphs~\cite{AACL}. It therefore provides a natural bridge between directed and undirected graph theory, which is also central to the questions considered in this paper.

A \emph{tournament} is an orientation of a complete graph. For a total order $\prec$ of $V(T)$, the \emph{backward graph} $T^{\prec}$ is the graph on $V(T)$ such that, for $x\prec y$,
\[
xy\in E(T^{\prec}) \quad\Longleftrightarrow\quad y\to x\text{ in }T.
\]
The \emph{dichromatic number} $\dchi(T)$ is the minimum number of parts in a partition of $V(T)$ into acyclic subtournaments; such a partition is called a \emph{dicoloring}. Equivalently, $\dchi(T)$ is the minimum chromatic number of $T^{\prec}$ over all total orders $\prec$ of $V(T)$. We also introduce the term \emph{diclique number} for $\domega(T)$, defined by the second fomula below: 
\begin{equation}\label{eq:parameters}
\dchi(T)=\min_{\prec}\chi(T^{\prec}),
\qquad \domega(T):=\min_{\prec}\omega(T^{\prec}).
\end{equation}
A tournament class $\mathcal T$ is $\dchi$-bounded if there is a function $f\colon\mathbb N\to\mathbb N$ such that $\dchi(T)\le f(\domega(T))$ for every $T\in\mathcal T$.

Twin-width was introduced by Bonnet, Kim, Thomass\'e, and Watrigant~\cite{BKTW}. We use its standard formulation for finite binary relational structures. Let $\mathcal A$ be such a structure and put $n:=|V(\mathcal A)|$. A \emph{contraction sequence} is a sequence $\mathcal P_n,\ldots,\mathcal P_1$ of partitions of $V(\mathcal A)$, starting with the singleton partition and ending with $\mathcal P_1=\{V(\mathcal A)\}$, in which $\mathcal P_{i-1}$ is obtained from $\mathcal P_i$ by merging two parts. At a partition, distinct parts $A,B$ are in \emph{conflict} if some basic binary relation of $\mathcal A$ is nonconstant on $A\times B$ or on $B\times A$. The width of the sequence is the largest number of parts in conflict with any one part at any stage, and $\tww(\mathcal A)$ is the minimum width of a contraction sequence.

An \emph{ordered graph} $(G,\prec)$ is a graph equipped with a total order of its vertices. Thus $\tww(G)$ uses the adjacency relation of a graph, $\tww(T)$ uses the arc relation of a tournament, and $\tww(G,\prec)$ uses both adjacency and the order relation. In particular,
\[
\tww(G)\le \tww(G,\prec),
\]
since forgetting the order cannot create conflicts.

The parameter $\domega$ already appeared in Ilhee Kim's doctoral thesis~\cite{Kim}, and clique numbers of backward graphs of tournaments were also considered by Nguyen, Scott, and Seymour~\cite{NSS}. Its systematic study was initiated by Aboulker, Aubian, Charbit, and Lopes~\cite{AACL}, who developed the $\dchi$-boundedness framework for tournaments. They focused on the relationship between $\domega(T)$ and $\dchi(T)$, studied forbidden-subtournament classes, and proved that substitution preserves $\dchi$-boundedness. In particular, they asked whether every class of tournaments of bounded twin-width is $\dchi$-bounded by $\domega$ (see~\cite[Conjecture~3.13]{AACL}).

The following transfer theorem is our main result. 

\begin{theorem}\label{thm:transfer}
For every pair of integers $k,r\ge 0$ there exists $h(k,r)$ such that, for every tournament $T$ and every order $\prec$,
\begin{equation}\label{eq:transfer}
\tww(T)\le k,\quad \omega(T^{\prec})\le r
\quad\Longrightarrow\quad
\tww(T^{\prec},\prec)\le h(k,r).
\end{equation}
\end{theorem}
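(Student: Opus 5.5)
We argue by contradiction, using the grid characterization of twin-width for ordered structures. Suppose $\tww(T^{\prec},\prec)$ is unbounded over tournaments with $\tww(T)\le k$ and $\omega(T^{\prec})\le r$.

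\textbf{Step 1: extract a semigrid.} By the regular-semigrid theorem for ordered graphs (Bonnet--Giocanti--Ossona de Mendez--Simon--Thomass\'e--Toru\'nczyk), unbounded ordered twin-width forces the following structure for every $t$. The adjacency matrix of $T^{\prec}$, with rows and columns indexed in the order $\prec$, contains a $t\times t$ division into consecutive intervals $R_1<\dots<R_t$ and $C_1<\dots<C_t$. On this division, one of a bounded list of regular patterns is realized with unbounded rank: each cell is nonconstant, and the cells carry, after passing to a subdivision, an encoding of an arbitrary permutation. Since $T^{\prec}$ is symmetric, we may pass to a subgrid in which all row intervals precede all column intervals, or vice versa. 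This uses Ramsey on the interleaving of the $2t$ intervals. Then the cells record arcs of $T$ between two $\prec$-separated sets $X=\bigcup R_i$ and $Y=\bigcup C_j$. A backward edge from $y\in Y$ to $x\in X$ with $x\prec y$ means $y\to x$.

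\textbf{Step 2: use $\omega(T^{\prec})\le r$ to tame the pattern.} The bounded clique number excludes large ``full'' regions. By Ramsey's theorem, inside each interval of size exceeding $R(r+1,s)$ there is an independent set of size $s$ in $T^{\prec}$, which is a transitive subtournament consistent with $\prec$. We refine the semigrid, keeping one representative from each of boundedly many chosen positions. The goal is to arrive at a canonical pattern: a matching-type, or half-graph/triangular-type, encoding of an arbitrary permutation $\sigma$ of $[t]$, realized between vertex sets that are transitive in $T$.

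\textbf{Step 3: transfer to $T$.} With the order forgotten, the arcs of $T$ on $X\cup Y$ still encode $\sigma$, because the within-part arcs are determined (the parts are transitive and ordered by $\prec$) and the cross arcs are given by the pattern. This yields, for every permutation $\sigma$, a subtournament of $T$ from a fixed finite family of permutation-encoding tournaments, such as the tournament analogues of $\sigma$-grids. These are known obstructions to bounded twin-width in tournaments: the class of all of them has unbounded twin-width, for instance because it is not small, or by the tournament grid theorem of Geniet--Thomass\'e. Since twin-width is monotone under induced substructures, $\tww(T)\le k$ is contradicted once $t$ is large. Hence $h(k,r)$ exists, although non-explicitly, via compactness of the Ramsey arguments.

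\textbf{Main obstacle.} I expect the hardest part to be Step 2. The difficulty is showing that bounded $\omega(T^{\prec})$ genuinely eliminates the patterns which encode permutations only through the order, namely those that are innocuous for $T$. The key case is the half-graph patterns. A half-graph in $T^{\prec}$ between $X$ and $Y$ is just a transitive-like configuration in $T$ and need not have large twin-width. I would first try to show that such regular patterns, when combined with the $\prec$-consistent structure inside the parts, either contain a clique of size $r+1$ in $T^{\prec}$ or can be ``untwisted'' into a matching pattern by XOR-ing with a half-graph. That operation preserves bounded twin-width of $T$ up to a function of $k$, because a half-graph aligned with $\prec$ is definable from the tournament restricted to transitive parts.
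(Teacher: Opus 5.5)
Your outline matches the paper's strategy: ordered grid theorem, then $\omega(T^{\prec})\le r$ forces a canonical encoding, then you read it off in $T$ and invoke the Geniet--Thomass\'e obstructions. There is a genuine gap, however. Step~2, the only step with real content, is stated as a goal rather than proved, and the ``main obstacle'' you describe is misdiagnosed. Starting from a rank division with nonconstant cells, you never show that choosing Ramsey-independent representatives inside intervals preserves the permutation encoding. Nor do you show why only finitely many canonical patterns can result. Your proposed fix, XOR-ing with a half-graph, is also unjustified. After Step~1 you have $X\prec Y$, so $\prec$ is constant across the two parts. A half-graph between $X$ and $Y$ is therefore not definable from $\prec$, nor from the internal transitive orders of $X$ and $Y$.

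The missing idea is to apply the class form of the Simon--Toru\'nczyk theorem directly. It says $\cC_{k,r}$ would contain arbitrarily large regular $n\times n$ semigrids of one of $256$ fully explicit schemes. No Ramsey step is then needed, because regularity makes the clique bound act at once. Suppose $I_0$ were a clique, or any of the four within-$U$ types (same row, same column, increasing, decreasing columns) were an edge. Then $I_0$, $I_1$, $\{v_{j,1}\}$, $\{v_{j,j}\}$ or $\{v_{j,n+1-j}\}$ would be a clique of size at least $n>r$. So $I_0$ and $U$ are independent in $T^{\prec}$, i.e.\ transitive in $T$ along $\prec$. Only $8$ schemes remain: $I_0$ before or after $U$, and $R\in\{=,\ne,\le,\ge\}$.

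In each of these, take $a_i=v_{0,i}$ and $b_j=v_{j,\pi^{-1}(j)}$. In the two cases with $I_0\prec U$ and $R\in\{\le,\ge\}$, shift to $a_{i+1}$, resp.\ $v_{j,\pi^{-1}(j)+1}$, which the $(n+1)$st column permits. Possibly after swapping the roles of the two sequences, this yields an induced $F_{Q,n}(\pi)$ or $F_{=,n}(\pi^{-1})$ for every $\pi$, with $Q$ fixed by the scheme. In particular, the half-graph cases are not innocuous. A single aligned half-graph has bounded twin-width, but here the thresholds are permuted against the row order of $U$. That is precisely $F_{\le,n}(\pi)$ or $F_{\ge,n}(\pi)$, one of the three obstruction families. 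Nothing has to be untwisted, and your Step~3 then finishes the proof as in the paper.
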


As an immediate consequence, we answer Conjecture~3.13 of~\cite{AACL} affirmatively.

\begin{corollary}\label{cor:main}
For every integer $k\ge 0$ there exists a function $f_k$ such that every tournament $T$ with $\tww(T)\le k$ satisfies
\[
\dchi(T)\le f_k\bigl(\domega(T)\bigr).
\]
\end{corollary}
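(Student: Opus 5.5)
The statement to prove is Corollary~\ref{cor:main}. I will derive it from Theorem~\ref{thm:transfer} together with the known polynomial $\chi$-boundedness of graph classes of bounded twin-width (Bourneuf--Thomass\'e): for every $t$ there is a function $g_t$ such that every graph $G$ with $\tww(G)\le t$ satisfies $\chi(G)\le g_t(\omega(G))$.

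Fix $k$ and a tournament $T$ with $\tww(T)\le k$, and set $r:=\domega(T)$. By the definition in \eqref{eq:parameters}, there is a total order $\prec$ of $V(T)$ with $\omega(T^{\prec})=r$. We fix such an optimal order.

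\begin{itemize}
\item Since $\tww(T)\le k$ and $\omega(T^{\prec})\le r$, Theorem~\ref{thm:transfer} gives $\tww(T^{\prec},\prec)\le h(k,r)$.
\item Forgetting the order cannot create conflicts, so $\tww(T^{\prec})\le\tww(T^{\prec},\prec)\le h(k,r)$.
\item Applying the $\chi$-boundedness result with $t=h(k,r)$ gives $\chi(T^{\prec})\le g_{h(k,r)}(r)$.
\item Since $\dchi(T)$ is the minimum of $\chi(T^{\prec'})$ over all orders $\prec'$, we get $\dchi(T)\le\chi(T^{\prec})$.
\end{itemize}

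Hence we may set $f_k(r):=g_{h(k,r)}(r)$. This depends only on $k$ and $r=\domega(T)$, which proves the corollary.

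The one point needing care is that $h(k,r)$ depends on $r$, so the twin-width bound on $T^{\prec}$ is not uniform over $T$. This is harmless: $f_k$ is only required to be some function of $\domega(T)$, and we evaluate it pointwise. The bound therefore need not be polynomial, since $g_t$ is applied at a width that itself grows with $r$.

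All the substantive difficulty lies in Theorem~\ref{thm:transfer}, which we take as given here. The corollary itself is a short composition of that theorem with the external $\chi$-boundedness theorem.
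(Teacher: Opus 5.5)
Your proposal is correct and follows the paper's proof step for step: fix an order attaining $\domega(T)$, apply Theorem~\ref{thm:transfer}, forget the order to bound $\tww(T^{\prec})$, invoke the Bourneuf--Thomass\'e bound (Lemma~\ref{lem:chi}), and conclude via $\dchi(T)\le\chi(T^{\prec})$. You are also right that the dependence of $h(k,r)$ on $r$ is harmless, and that it means the resulting $f_k$ need not be polynomial.
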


The $\chi$-boundedness of graphs of bounded twin-width was first proved by Bonnet, Geniet, Kim, Thomass\'e, and Watrigant~\cite{BGKTW}. A quasi-polynomial bound was subsequently obtained by Pilipczuk and Soko{\l}owski~\cite{PS}, followed by the polynomial bound below.

\begin{lemma}[Bourneuf--Thomass\'e, Theorem~1.2 in~\cite{BT}]\label{lem:chi}
For every $d\ge0$, there is a polynomial $p_d$ such that every graph $H$ with $\tww(H)\le d$ satisfies
\[
\chi(H)\le p_d(\omega(H)).
\]
\end{lemma}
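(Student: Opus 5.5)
The statement to prove is the last one displayed, Lemma~\ref{lem:chi}. It is quoted from Bourneuf and Thomass\'e~\cite{BT} and is used here as a black box, so within this paper it needs no proof. Still, here is the route I would take to establish it.

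\textbf{Setup.} I would induct on $\omega=\omega(H)$ and use the contraction sequence of $H$ to find a partition of bounded complexity. The first step is the structural input from bounded twin-width, namely the strong version of the bounded-twin-width regularity lemma. A graph $H$ with $\tww(H)\le d$ is assumed to contain no $K_{t,t}$ or $\overline{K_{t,t}}$-type semi-induced obstruction, or else we can handle this case separately. Such a graph admits a vertex partition into parts $P_1,\dots,P_m$ with the following property: each part is homogeneous (complete or anticomplete) to all but $O_d(1)$ others. This is obtained from the stage of the contraction sequence where parts have size roughly $|V(H)|/c$; in that red graph every part has red degree at most $d$.

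\textbf{Key steps.} Following the Pilipczuk--Soko{\l}owski strategy, I would consider the red graph at a suitable level. It has bounded degree, so it is $(d+1)$-colorable. Consequently the parts can be grouped into $d+1$ classes, and within each class any two parts are homogeneous to each other. Inside one class, the quotient by the parts is a graph given by the black edges. Its vertices are parts, and each part induces a subgraph of $H$ of twin-width at most $d$. A clique in the quotient selects parts that are pairwise complete, so its clique numbers add up. This gives the recurrence $\chi$ of the union $\le$ $\chi$ of the quotient weighted by the clique numbers of the parts. A cograph-like or substitution argument then bounds $\chi$ by a product of the bounds for smaller $\omega$. The point is that whenever a part $P$ is complete to some other nonempty part, $\omega(P)<\omega$, so induction applies.

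\textbf{Main obstacle.} Getting a polynomial bound rather than a quasi-polynomial one is the hard part. The naive recursion multiplies the factors $(d+1)$ over $\log$ many levels. To avoid this I would use Bourneuf--Thomass\'e's idea of delayed extraction: each step must make the clique number drop by a constant fraction, or make $\omega$ drop by one while paying only an additive cost. That balance is what yields $\chi\le \omega^{O_d(1)}$.
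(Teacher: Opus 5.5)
Your opening sentence matches the paper exactly: Lemma~\ref{lem:chi} is imported as Theorem~1.2 of~\cite{BT} and is used only as a black box in the proof of Corollary~\ref{cor:main}, so citing it is all that is required. The sketch you then offer as a route to a proof does not close, however, and you should not present it as one.

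The first step of the sketch is fine. At every stage of a width-$d$ contraction sequence the red graph has maximum degree at most $d$. A $(d+1)$-colouring of it therefore groups the parts into $d+1$ classes within which any two parts are homogeneous, and inside one class $H$ is the substitution of the parts $H[P]$ into the black quotient $Q$. But substitution bounds $\chi$ by the chromatic number of $Q$ weighted by $\chi(H[P])$, not by $\omega(H[P])$ as you wrote; a class consisting of one part with $\chi(H[P])>\omega(H[P])$ already violates your version.

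The serious problem is that nothing decreases. $Q$ is again a graph with $\tww(Q)\le d$, and its clique number can equal $\omega(H)$: at the singleton stage the red graph is empty and $Q=H$. So bounding $\chi(Q)$ is the original problem. Likewise, parts with no black neighbour in their class may keep $\omega(H[P])=\omega(H)$, and your remark about parts that are complete to another part does not cover them. If you use induction on $|V(H)|$ for these, the scheme yields only an inequality of the shape $\chi(H)\le (d+1)\,f(\omega)\,f(\omega-1)$, where $f(\omega)$ is the very bound being proved. This can never be at most $f(\omega)$, so the recursion gives no bound at all, not merely a quasi-polynomial one.

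The missing idea is how to choose a level at which some parameter provably drops, at a cost per step small enough to produce $\omega^{O_d(1)}$. That is the entire content of~\cite{BT}, and your last paragraph names this mechanism without supplying it.

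Two smaller points. Bounded twin-width does not exclude $K_{t,t}$, since complete bipartite graphs have twin-width $0$. Also, an arbitrary contraction sequence need not have a stage with all parts of size about $|V(H)|/c$. Neither assumption is needed for the red-degree bound you actually use.
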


The dependence on $\domega(T)$ is necessary: Aboulker, Aubian, Charbit, and Lopes~\cite{AACL} construct tournaments of twin-width $1$ and unbounded dichromatic number. Lemma~\ref{lem:chi} suggests applying polynomial $\chi$-boundedness to a backward graph. A \emph{binary-search-tree (BST) order} is the left-to-right order of a binary tree on $V(T)$ in which the left and right subtrees of each vertex contain only its in- and out-neighbors, respectively. Geniet and Thomass\'e~\cite[Lemma~1.4]{GT} proved that every BST order has ordered twin-width bounded in terms of the tournament twin-width. The remaining ordering gap is that a low-width BST order need not minimize the clique number of the backward graph. Conjecture~3.17 of~\cite{AACL} would close this gap by asserting that some BST order has backward-graph clique number bounded in terms of $\domega(T)$. Here we close the gap by the different route expressed in Theorem~\ref{thm:transfer}. Applying~\eqref{eq:transfer} to an order attaining $\domega(T)$, followed by Lemma~\ref{lem:chi}, yields Corollary~\ref{cor:main}.\\

To prove Theorem~\ref{thm:transfer}, we need some known facts about semigrids and permutation-encoding tournaments. Related structural and coloring questions for ordered graphs have also been studied through complete interval minors~\cite{BCTT}.

For disjoint vertex sets $A,B$ in an ordered graph, write $A\prec B$ if $a\prec b$ for every $a\in A$ and $b\in B$; a $\prec$-interval is a set of consecutive vertices. A regular $n\times n$
semigrid, as introduced by Simon and Toru\'nczyk~\cite{ST}, is an ordered graph whose vertex set is partitioned into $\prec$-intervals
\[
I_0,I_1,\ldots,I_n,\quad |I_i|=n+1
\]
such that $I_1\prec\cdots\prec I_n$ and $I_0$ lies entirely before or after their union. For $0\le i\le n$, write
\[
I_i=v_{i,1}\prec\cdots\prec v_{i,n+1}.
\]
The scheme of a regular semigrid records the following data. First, it
specifies a relation $R\in\{=,\neq,\le,\ge\}$ governing the adjacencies
between $I_0$ and $U:=I_1\cup\cdots\cup I_n$, for $1\le s,t\le n+1$ and $1\le j\le n$:
\begin{equation}\label{eq:semigrid-cross}
v_{0,s} v_{j,t}\in E(G) \quad\Longleftrightarrow\quad s\,R\,t.
\end{equation}
It also specifies whether $I_0$ is a clique or an independent set,
whether $I_0\prec U$ or $U\prec I_0$, and the edge/non-edge value for
each of the four relative-position types within $U$: vertices in the
same row, vertices in the same column, and vertices in different rows
whose column indices are respectively increasing or decreasing.
Consequently, there are
\[
4\cdot 2\cdot 2\cdot 2^4=256
\]
possible regular-semigrid schemes~\cite[Section~4]{ST}.

A class of ordered graphs or tournaments is \emph{hereditary} if it is closed under induced subgraphs or subtournaments, with the order restricted in the ordered case.

The relevance of these configurations comes from the grid theorem of Simon and Toru\'nczyk: unbounded twin-width in a hereditary class of ordered graphs forces regular square semigrids of arbitrarily large size.

\begin{lemma}[Simon--Toru\'nczyk, Theorem~1.1 in~\cite{ST}]\label{lem:semigrid}
Every hereditary class of ordered graphs of unbounded twin-width contains arbitrarily large regular square semigrids.
\end{lemma}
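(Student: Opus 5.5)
This lemma is quoted from~\cite{ST}, and the paper relies on it as a black box. Were I to prove it, I would reduce it to the ordered grid theorem and then extract a semigrid by Ramsey arguments. The first step is the characterization of ordered twin-width by mixed minors, due to Bonnet, Giocanti, Ossona de Mendez, Simon, Thomass\'e and Toru\'nczyk. It says that $\tww(G,\prec)$ is bounded if and only if the adjacency matrix of $G$, with rows and columns ordered by $\prec$, has no large mixed minor. Such a minor is a division of the rows into $m$ consecutive $\prec$-intervals $R_1,\ldots,R_m$ and of the columns into $m$ consecutive intervals $C_1,\ldots,C_m$ such that every cell $R_a\times C_b$ is mixed, meaning neither constant nor "constant up to the diagonal". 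Consequently, a hereditary class of unbounded ordered twin-width contains graphs with $m$-mixed minors for every $m$. Everything that follows is finitary extraction from one such graph, with $m$ huge compared with the target semigrid size $n$.

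Second, I would separate rows from columns. Rows and columns index the same ordered vertex set, so the row and column intervals interleave. Applying pigeonhole and Ramsey's theorem to the relative positions of the $R_a$ and $C_b$, I would pass to a large sub-division in which the chosen row blocks are disjoint from the chosen column blocks. I would then use one more Ramsey step to reach one of two configurations. In the first, all chosen row blocks lie before all chosen column blocks, or the reverse. In the second, the blocks alternate. In the alternating case I would discard all row blocks but one, or regroup blocks, so that a single "row side" $I_0$ sits entirely before or after $U$. This is the source of the two cases $I_0\prec U$ and $U\prec I_0$ in the scheme.

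Third, I would build the grid indices. Each mixed cell $R_a\times C_b$ contains a $2\times 2$ non-constant witness, i.e.\ consecutive rows or columns that differ on that cell. Within a single column block $C_j$, I would select $n+1$ vertices $v_{j,1}\prec\cdots\prec v_{j,n+1}$. Next I would select $n+1$ row vertices $v_{0,1}\prec\cdots\prec v_{0,n+1}$, one "switch point" per row block. The aim is that the adjacency between $v_{0,s}$ and $v_{j,t}$ depends only on the order type of $(s,t)$, namely $s<t$, $s=t$ or $s>t$. To achieve this, I would colour tuples of indices $(s,t,j,\ldots)$ by the isomorphism type of the induced ordered subgraph they span. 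Since there are finitely many types, the product and hypergraph Ramsey theorems give large homogeneous index sets. Mixedness guarantees that the resulting relation $R$ is non-constant, which leaves exactly $=,\neq,\le,\ge$. One further homogenisation settles whether $I_0$ is a clique or an independent set. It also settles the four adjacency types inside $U$: same row, same column, and different rows with increasing or with decreasing column indices.

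The step I expect to be the main obstacle is the third one. Mixedness yields only local witnesses, a different switch position in each cell. Aligning these into a single coherent square index set, in which the $t$-th vertex of every $I_j$ "means the same column", requires a careful choice. I would first try the following: use the mixed witnesses to define, for each pair of row and column blocks, the position of the adjacency change. Then apply an Erd\H{o}s--Szekeres-type argument, monotone subsequences of these positions, so that the positions become consistently ordered across blocks before invoking Ramsey's theorem. If this fails, I would fall back on Simon and Toru\'nczyk's own route through their notion of "restrained" classes.
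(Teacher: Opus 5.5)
The paper contains no proof of this lemma. It imports it as Theorem~1.1 of~\cite{ST}, so your opening remark, that it is used as a black box, matches the paper's treatment exactly. The sketch you add, however, has a genuine gap, and it lies deeper than the alignment issue you flag: mixed minors are too weak a starting point. The direction of your step-one equivalence that you actually use (large ordered twin-width forces large mixed minors) is fine. Its converse is false, and your steps two and three use nothing beyond the mixed minor.

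Consider the ordered graph on $[4N]$ with $i\sim j$ iff $i+j$ is odd. Cut rows and columns into consecutive blocks of four. This gives an $N$-division in which every cell, diagonal ones included, contains an off-diagonal copy of $\bigl(\begin{smallmatrix}0&1\\1&0\end{smallmatrix}\bigr)$, so every cell is mixed under any reasonable reading of your definition. Yet the graph is complete bipartite between odd and even positions, so its vertices have at most two distinct neighbourhoods in $I_1$. In a regular $n\times n$ semigrid, by contrast, the $n+1$ vertices of $I_0$ have pairwise distinct neighbourhoods in $I_1$ for each $R\in\{=,\neq,\le,\ge\}$. Hence this graph contains no regular semigrid with $n\ge 2$. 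Consistently, its ordered twin-width is at most $1$: absorb $3,5,7,\dots$ into $\{1\}$ and $4,6,8,\dots$ into $\{2\}$ alternately. In particular, your claim that ``mixedness guarantees that the resulting relation $R$ is non-constant'' is false. After homogenisation, the chosen rows and columns may just realise a fixed $2\times 2$ pattern blown up, and $R$ is then constant.

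What is missing is an invariant that is a genuine obstruction to bounded ordered twin-width and forces many pairwise distinct rows and columns in every cell, for instance divisions in which every cell has high rank, as in Twin-width~IV. You would also need the Ramsey argument that turns such cells into $n+1$ vertices of $I_0$ whose switch points fall at the same index $t$ in every block $I_j$. That conversion is the substance of the Simon--Toru\'nczyk theorem. You leave it open, and falling back on their route amounts to citing the lemma.

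Two smaller points. First, in the alternating case you should not keep a single row block, since step three needs one switch point per row block; a simple count already gives about half of the row blocks entirely on one side of half of the column blocks. Second, the non-constant relations on the order type of $(s,t)$ are six, not four: $=,\neq,<,\le,>,\ge$. The strict ones must be reduced to $\le,\ge$ by shifting indices, at the cost of one vertex per block.
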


For an ordered graph $(G,\prec)$, let $\Inv(G,\prec)$ be the tournament obtained, for $x\prec y$, by
\begin{equation}\label{eq:inverse}
y\to x \Longleftrightarrow xy\in E(G),
\qquad x\to y \Longleftrightarrow xy\notin E(G).
\end{equation}
Thus $(\Inv(G,\prec))^{\prec}=G$. For $n\ge1$, let $[n]:=\{1,\ldots,n\}$ and $\mathfrak S_n$ denote the permutations of $[n]$. For $Q\in\{=,\le,\ge\}$ and $\pi\in\mathfrak S_n$, let
$F_{Q,n}(\pi)$ be the tournament on vertices $x_1,\ldots,x_n,y_1,\ldots,y_n$ whose arcs are defined, for $1\le i,j\le n$, by
\begin{equation}\label{eq:permutation-encoding}
\begin{aligned}
x_i\to x_j \Longleftrightarrow i<j, \qquad
y_i\to y_j \Longleftrightarrow i<j,\qquad
x_i\to y_j \Longleftrightarrow i\,Q\,\pi^{-1}(j).
\end{aligned}
\end{equation}

\begin{center}
\resizebox{0.82\textwidth}{!}{%
\begin{tikzpicture}[
    vertex/.style={circle,fill=black,inner sep=2.2pt},
    >=stealth,
    font=\scriptsize
]

\begin{scope}[xshift=0cm]
\node[vertex,label=left:$x_1$] (x1) at (0,0) {};
\node[vertex,label=left:$x_2$] (x2) at (0,1) {};
\node[vertex,label=left:$x_3$] (x3) at (0,2) {};
\node[vertex,label=left:$x_4$] (x4) at (0,3) {};

\node[vertex,label=right:$y_1$] (y1) at (1.8,0) {};
\node[vertex,label=right:$y_2$] (y2) at (1.8,1) {};
\node[vertex,label=right:$y_3$] (y3) at (1.8,2) {};
\node[vertex,label=right:$y_4$] (y4) at (1.8,3) {};

\draw[->] (x1)--(x2)--(x3)--(x4);
\draw[->] (y1)--(y2)--(y3)--(y4);

\draw[->] (x1)--(y3);
\draw[->] (x2)--(y1);
\draw[->] (x3)--(y4);
\draw[->] (x4)--(y2);

\node at (0.9,-0.9) {$F_{=,4}(\pi)$};
\end{scope}

\begin{scope}[xshift=4.0cm]
\node[vertex,label=left:$x_1$] (a1) at (0,0) {};
\node[vertex,label=left:$x_2$] (a2) at (0,1) {};
\node[vertex,label=left:$x_3$] (a3) at (0,2) {};
\node[vertex,label=left:$x_4$] (a4) at (0,3) {};

\node[vertex,label=right:$y_1$] (b1) at (1.8,0) {};
\node[vertex,label=right:$y_2$] (b2) at (1.8,1) {};
\node[vertex,label=right:$y_3$] (b3) at (1.8,2) {};
\node[vertex,label=right:$y_4$] (b4) at (1.8,3) {};

\draw[->] (a1)--(a2)--(a3)--(a4);
\draw[->] (b1)--(b2)--(b3)--(b4);

\draw[->] (a1)--(b1);
\draw[->] (a1)--(b2);
\draw[->] (a1)--(b3);
\draw[->] (a1)--(b4);

\draw[->] (a2)--(b1);
\draw[->] (a2)--(b2);
\draw[->] (a2)--(b4);

\draw[->] (a3)--(b2);
\draw[->] (a3)--(b4);

\draw[->] (a4)--(b2);

\node at (0.9,-0.9) {$F_{\leq,4}(\pi)$};
\end{scope}

\begin{scope}[xshift=8.0cm]
\node[vertex,label=left:$x_1$] (c1) at (0,0) {};
\node[vertex,label=left:$x_2$] (c2) at (0,1) {};
\node[vertex,label=left:$x_3$] (c3) at (0,2) {};
\node[vertex,label=left:$x_4$] (c4) at (0,3) {};

\node[vertex,label=right:$y_1$] (d1) at (1.8,0) {};
\node[vertex,label=right:$y_2$] (d2) at (1.8,1) {};
\node[vertex,label=right:$y_3$] (d3) at (1.8,2) {};
\node[vertex,label=right:$y_4$] (d4) at (1.8,3) {};

\draw[->] (c1)--(c2)--(c3)--(c4);
\draw[->] (d1)--(d2)--(d3)--(d4);

\draw[->] (c1)--(d3);

\draw[->] (c2)--(d1);
\draw[->] (c2)--(d3);

\draw[->] (c3)--(d1);
\draw[->] (c3)--(d3);
\draw[->] (c3)--(d4);

\draw[->] (c4)--(d1);
\draw[->] (c4)--(d2);
\draw[->] (c4)--(d3);
\draw[->] (c4)--(d4);

\node at (0.9,-0.9) {$F_{\geq,4}(\pi)$};
\end{scope}

\node at (4.9,-1.7)
{\small Figure 1. The graphs $F_{Q,4}(\pi)$ for $Q\in\{=,\leq,\geq\}$ and $\pi=(3,1,4,2)$.};

\end{tikzpicture}%
}
\end{center}

For fixed $Q$, the tournaments $F_{Q,n}(\pi)$, over all $n\ge1$ and $\pi\in\mathfrak S_n$, form one of the three canonical families encoding permutations from~\cite{GT}. Throughout this paper, we call their members \emph{permutation-encoding tournaments}. Let $\cF_Q$ consist of all induced subtournaments of these tournaments.

The reason for introducing these three families is that each is an obstruction to bounded twin-width, as shown by Geniet and Thomass\'e.

\begin{lemma}[Geniet--Thomass\'e, Theorem~1.6 in~\cite{GT}]\label{lem:obstruction}
For each $Q\in\{=,\le,\ge\}$, the hereditary tournament class $\cF_Q$ has unbounded twin-width.
\end{lemma}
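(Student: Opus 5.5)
The plan is a counting argument: bounded twin-width forces at most exponentially many isomorphism types, whereas $\cF_Q$ has superexponentially many. Suppose for contradiction that every member of $\cF_Q$ has twin-width at most $k$.

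\emph{Step 1: an exponential upper bound.} By Geniet--Thomass\'e~\cite[Lemma~1.4]{GT}, every tournament $T\in\cF_Q$ admits a BST order $\prec$ with $\tww(T^{\prec},\prec)\le g(k)$. The pair $(T^{\prec},\prec)$ determines $T$ up to isomorphism, since $\Inv(T^{\prec},\prec)=T$. So it suffices to count unlabeled ordered graphs of bounded twin-width. Here I would use the known fact that classes of ordered graphs of bounded twin-width are small. For ordered structures, labeled and unlabeled counts agree up to the factor $N!$, so such a class has at most $c^N$ ordered graphs on $N$ vertices up to order-isomorphism, with $c=c(g(k))$. (If this smallness statement is not considered available, I would reprove it via the standard encoding of a contraction sequence of bounded width, which gives $2^{O(N)}$ descriptions.) Consequently $\cF_Q$ contains at most $c^{N}$ pairwise non-isomorphic tournaments on $N$ vertices.

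\emph{Step 2: a superexponential lower bound.} Take $N=2n$ and consider the tournaments $F_{Q,n}(\pi)$ for $\pi\in\mathfrak S_n$. I claim each isomorphism type contains $F_{Q,n}(\pi)$ for at most $2^{2n}$ permutations $\pi$. To see this, fix an abstract tournament $T$ on $2n$ vertices and suppose an isomorphism $T\cong F_{Q,n}(\pi)$ is given.
\begin{itemize}
\item The isomorphism induces a partition $V(T)=X\cup Y$ into two transitive subtournaments of size $n$. There are at most $2^{2n}$ choices of the set $X$.
\item Once $X$ and $Y$ are fixed, the labels $x_1,\ldots,x_n$ and $y_1,\ldots,y_n$ are forced, because a transitive tournament has a unique topological order.
\item With this labeling fixed, $\pi$ is read off from the arcs $x_i\to y_j$. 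For $Q$ equal to $=$, the unique $j$ with $x_i\to y_j$ is $j=\pi(i)$. For $Q$ equal to $\le$ or $\ge$, the set $\{i:x_i\to y_j\}$ is a prefix or suffix of $[n]$ whose length determines $\pi^{-1}(j)$.
\end{itemize}
Hence there are at least $n!/4^n$ isomorphism types on $2n$ vertices. This quantity exceeds $c^{2n}$ for large $n$, which is a contradiction.

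The main obstacle is Step 1. The lemma quoted from~\cite{GT} controls only BST orders, so every order-dependent statement must be routed through such an order. I also need the smallness of bounded twin-width ordered classes in exactly the unlabeled form $c^N$. If citing it is not acceptable, I would derive it directly by encoding each contraction step with $O(1)$ bits relative to the order. Step 2 is elementary once the reconstruction above is checked for each $Q$.
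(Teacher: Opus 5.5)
Your proof is correct, and it gives an argument where the paper gives none: the paper imports this statement as Theorem~1.6 of~\cite{GT} and does not prove it. Your route is the growth dichotomy.

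\textbf{Step~1.} This is exactly the fact the paper itself uses in its concluding remarks: there are at most $c_k^n$ isomorphism types of $n$-vertex tournaments of twin-width at most $k$. You could cite that directly, but your derivation of it is sound. $\Inv$ recovers $T$ from $(T^{\prec},\prec)$, and ordered structures are rigid, so labeled and unlabeled counts differ by exactly $N!$.

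If the BST lemma of~\cite{GT} is phrased for the ordered tournament $(T,\prec)$, note that its twin-width equals that of $(T^{\prec},\prec)$. Given the order, arcs and backward edges are Boolean combinations of each other, so the conflicting pairs of parts are the same.

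\textbf{Step~2.} This is right for all three $Q$. The map $\pi\mapsto X$ is injective because the labels are forced by the unique topological orders of $T[X]$ and $T[Y]$. This gives at least $n!/\binom{2n}{n}$ isomorphism types on $2n$ vertices.

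\textbf{Comparison.} Compared with citing~\cite{GT}, your argument shows that the obstruction is a pure counting phenomenon. It uses only the generators $F_{Q,n}(\pi)$, with no heredity and no structural analysis of the families. It depends only on the BST-order lemma plus smallness of bounded-twin-width ordered graphs.

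\textbf{Caveat on the fallback.} Your fallback for the smallness step is not routine. A direct encoding of a contraction sequence spends $\Theta(\log N)$ bits per merge just to say which two parts merge. That gives only $2^{O(N\log N)}$ descriptions, which cannot beat $n!/4^n$. Reaching $2^{O(N)}$ is the real content of the smallness theorems for ordered structures, for example Twin-width~II/IV or the growth characterization of~\cite{ST} that the paper also invokes. So that citation carries real weight and should not be replaced by a sketch.
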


\section{Proof of Theorem~\ref{thm:transfer}}

\begin{lemma}\label{lem:extract}
Fix a regular semigrid scheme. Suppose an $n\times n$ semigrid $(G,\prec)$ of this scheme satisfies $\omega(G)\le r<n$. Then $I_0$ and $U=I_1\cup\cdots\cup I_n$ are independent. Moreover, there is a fixed $Q\in\{=,\le,\ge\}$, depending only on the scheme, such that for every $\pi\in\mathfrak S_n$, the tournament $\Inv(G,\prec)$ contains an induced copy of $F_{Q,n}(\pi)$.
\end{lemma}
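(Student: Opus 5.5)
The plan is to prove the two claims separately: first use $\omega(G)\le r<n$ to force all the intra-part adjacencies of the scheme to be non-edges, and then choose $2n$ vertices of the semigrid that realise $F_{Q,n}(\pi)$ inside $\Inv(G,\prec)$.

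\textbf{Independence of $I_0$ and $U$.} For each clique-type entry of the scheme I exhibit a clique of size at least $n>r$, which contradicts $\omega(G)\le r$.
\begin{itemize}
\item If $I_0$ were a clique, it would have size $n+1$.
\item If vertices in the same row were adjacent, the row $I_1$ would be a clique of size $n+1$.
\item If vertices in the same column were adjacent, the column $\{v_{1,1},\dots,v_{n,1}\}$ would be a clique of size $n$.
\item If vertices in different rows with increasing column indices were adjacent, the diagonal $\{v_{j,j}:1\le j\le n\}$ would be a clique of size $n$.
\item If vertices in different rows with decreasing column indices were adjacent, the antidiagonal $\{v_{j,n+1-j}:1\le j\le n\}$ would be a clique of size $n$.
\end{itemize}
So $I_0$ is independent and all four relative-position types within $U$ are non-edges, i.e.\ $U$ is independent. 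By \eqref{eq:inverse}, a $\prec$-ordered independent set of $G$ becomes a transitive subtournament of $\Inv(G,\prec)$ in which earlier vertices beat later ones. This gives the arcs $x_i\to x_j$ and $y_i\to y_j$ for $i<j$, provided each family is taken inside $I_0$ or inside $U$ with its indices increasing along $\prec$.

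\textbf{Embedding $F_{Q,n}(\pi)$.} I take one family from $I_0$ and the other from $U$. The $U$-side vertex indexed by $j$ is placed in row $j$, so that the $U$-side vertices are automatically $\prec$-increasing in $j$ (since $I_1\prec\cdots\prec I_n$). Its column is chosen according to $\pi$, for instance $v_{j,\pi^{-1}(j)}$, possibly shifted by one. The $I_0$-side vertices are $v_{0,i}$, possibly shifted by one, and these are $\prec$-increasing in $i$. The cross arcs are then read off from \eqref{eq:semigrid-cross} and \eqref{eq:inverse}: between a vertex $a$ and a later vertex $b$, the arc goes $b\to a$ exactly when $ab$ is an edge.

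There are two choices to make: whether the $x$'s or the $y$'s come from $I_0$, and whether $I_0\prec U$ or $U\prec I_0$. Together these determine whether the cross relation is $R$ or its complement $\neg R$, and in which direction it is read. For example, suppose $I_0\prec U$ and the $x$'s are taken in $I_0$. Then $x_i\to y_j$ iff $\neg(i\,R\,t_j)$, where $t_j$ is the column of $y_j$. So $R={\neq}$ yields $Q={=}$. If instead $R={=}$, I put the $y$'s in $I_0$ and the $x$'s in $U$; then $y$ precedes $x$, and $x\to y$ holds exactly on edges, so the relation $=$ is obtained directly.

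\textbf{Main obstacle.} The remaining work is the bookkeeping for $R\in\{\le,\ge\}$. Complementation turns these into the strict relations $>$ and $<$, and swapping the roles of $x$ and $y$ reverses the relation again. I would handle strictness using the spare $(n+1)$-st index: shift either the $I_0$ indices or the column indices by one, so that $i<t$ becomes $i\le t-1$. This reduces every case to one of $=,\le,\ge$ on indices in $[n]$. I expect this step to require the care: one must check, for each of the $4\times 2$ combinations of $R$ and side, that the shifted indices stay in $[n+1]$ and that the resulting $Q$ depends only on the scheme and not on $\pi$.
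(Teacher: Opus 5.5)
Your proposal matches the paper's proof essentially step for step: the same clique witnesses ($I_0$ together with the row, column, diagonal and antidiagonal in $U$) force independence, and the embedding uses the same vertices $a_i=v_{0,i}$, $b_j=v_{j,\pi^{-1}(j)}$, $b_j^+=v_{j,\pi^{-1}(j)+1}$, with the spare $(n+1)$-st position supplying the shifts and a role swap handling the symmetric relations. The bookkeeping you defer closes exactly as in the paper's table. For $I_0\prec U$, take $x_i=v_{0,i+1}$ when $R$ is $\le$ to get $Q={\ge}$, and $y_j=b_j^+$ when $R$ is $\ge$ to get $Q={\le}$; for $U\prec I_0$, the cases $=,\le,\ge$ are direct and $\ne$ uses the swap. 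Note that the swap is only legitimate for $R\in\{=,\ne\}$: for $\le,\ge$ it attaches the permutation to the $x$'s rather than the $y$'s, and so does not yield $F_{Q,n}(\pi)$ in general.
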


\begin{proof}
We denote the vertex set of $G$ by
\[
I_i=v_{i,1}\prec\cdots\prec v_{i,n+1}, \quad 0\le i \le n.
\]
If $I_0$ were a clique, then $|I_0|=n+1>r$, a contradiction. Inside $U$, regularity assigns one edge/non-edge value to each of the four relative positions, i.e., the same row, the same column, and different rows with increasing or decreasing column indices. If any one of these values were ``edge'', the corresponding set below would be a clique of size at least $n$:
\[
I_1,\qquad
\{v_{j,1}:1\le j\le n\},\qquad
\{v_{j,j}:1\le j\le n\},\qquad
\{v_{j,n+1-j}:1\le j\le n\}.
\]
Since $\omega(G)\le r<n$, all four values must be non-edges. Thus both $I_0$ and $U$ are independent. Consequently, only the position of $I_0$ and the cross-relation $R\in\{=,\ne,\le,\ge\}$ remain to be considered, leaving exactly eight possible schemes.

Fix $\pi\in\mathfrak S_n$ and put $t_j:=\pi^{-1}(j)$. Set
\[
a_i:=v_{0,i}, 1\le i\le n+1,\quad
b_j:=v_{j,t_j}, 1\le j\le n,\quad
\text{and}\quad b_j^+:=v_{j,t_j+1}, 1\le j\le n.
\]
Because $I_0$ and $U$ are independent in $G$, the sequences $(a_1,\ldots,a_n)$ and $(b_1,\ldots,b_n)$ are transitive subtournaments in $\Inv(G,\prec)$, both directed forward in their displayed orders. The same holds for $(b_1^+,\ldots,b_n^+)$. By~\eqref{eq:semigrid-cross} and~\eqref{eq:inverse}, the cross-arcs can be written without complement notation as
\begin{equation}\label{eq:cross-arcs}
\begin{array}{lll}
I_0\prec U &: & b_j\to a_i \ \Longleftrightarrow\ i\,R\,t_j,\\[2pt]
U\prec I_0 &: & a_i\to b_j \ \Longleftrightarrow\ i\,R\,t_j.
\end{array}
\end{equation}

First suppose that $I_0\prec U$. If $R$ is $\ne$, the reverse arc $a_i\to b_j$ occurs exactly when $i=t_j$, so the two sequences directly induce $F_{=,n}(\pi)$. If $R$ is $=$, exchanging the roles of the two sequences gives $F_{=,n}(\pi^{-1})$. In the case $R$ is $\le$, use $x_i:=a_{i+1}$ and $y_j:=b_j$ for $1\le i,j\le n$. Then
\[
x_i\to y_j \Longleftrightarrow i+1>t_j
\Longleftrightarrow i\ge t_j,
\]
so we obtain $F_{\ge,n}(\pi)$. In the case $R$ is $\ge$, use $x_i:=a_i$ and $y_j:=b_j^+$ for $1\le i,j\le n$. Then
\[
x_i\to y_j \Longleftrightarrow i<t_j+1
\Longleftrightarrow i\le t_j,
\]
so we obtain $F_{\le,n}(\pi)$. The extra $(n+1)$st position is exactly what makes these two shifts possible.

\begin{center}
\resizebox{1.1\textwidth}{!}{%
\begin{tikzpicture}[
    vertex/.style={circle,fill=black,inner sep=1.6pt},
    redvertex/.style={circle,fill=red,inner sep=1.6pt},
    >=stealth,
    font=\small
]

\begin{scope}[xshift=0cm]
\node[vertex] (a1) at (0,0) {};
\node[vertex] (a2) at (0.45,0) {};
\node[vertex] (a3) at (0.90,0) {};
\node[vertex] (a4) at (1.35,0) {};
\node[vertex] (a5) at (1.80,0) {};
\node[vertex] (a6) at (2.25,0) {};
\draw[->] (a1)--(a2)--(a3)--(a4)--(a5)--(a6);
\node[font=\fontsize{5}{5}\selectfont] at (0,-0.24) {$a_1$};
\node[font=\fontsize{5}{5}\selectfont] at (0.90,-0.24) {$a_j$};
\node[font=\fontsize{5}{5}\selectfont] at (1.80,-0.24) {$a_n$};
\draw[decorate,decoration={brace,mirror,amplitude=3pt}]
(-0.08,-0.48) -- (2.33,-0.48)
node[midway,below=4pt] {$I_0$};
\end{scope}

\begin{scope}[xshift=3.0cm]
\node[vertex] (b1) at (0,0) {};
\node[redvertex] (b2) at (0.45,0) {};
\node[vertex] (b3) at (0.90,0) {};
\node[vertex] (b4) at (1.35,0) {};
\node[vertex] (b5) at (1.80,0) {};
\node[vertex] (b6) at (2.25,0) {};
\draw[->] (b1)--(b2)--(b3)--(b4)--(b5)--(b6);
\node[font=\fontsize{5}{5}\selectfont] at (0.45,-0.24) {$b_1^+\!:\!t_1\!+\!1$};
\draw[decorate,decoration={brace,mirror,amplitude=3pt}]
(-0.08,-0.48) -- (2.33,-0.48)
node[midway,below=4pt] {$I_1$};
\end{scope}

\node at (6.0,0) {$\cdots$};

\begin{scope}[xshift=6.75cm]
\node[vertex] (c1) at (0,0) {};
\node[vertex] (c2) at (0.45,0) {};
\node[vertex] (c3) at (0.90,0) {};
\node[redvertex] (c4) at (1.35,0) {};
\node[vertex] (c5) at (1.80,0) {};
\node[vertex] (c6) at (2.25,0) {};
\draw[->] (c1)--(c2)--(c3)--(c4)--(c5)--(c6);
\node[font=\fontsize{5}{5}\selectfont] at (1.35,-0.24) {$b_j^+\!:\!t_j\!+\!1$};
\draw[decorate,decoration={brace,mirror,amplitude=3pt}]
(-0.08,-0.48) -- (2.33,-0.48)
node[midway,below=4pt] {$I_j$};
\end{scope}

\node at (9.75,0) {$\cdots$};

\begin{scope}[xshift=10.5cm]
\node[vertex] (d1) at (0,0) {};
\node[vertex] (d2) at (0.45,0) {};
\node[vertex] (d3) at (0.90,0) {};
\node[vertex] (d4) at (1.35,0) {};
\node[vertex] (d5) at (1.80,0) {};
\node[redvertex] (d6) at (2.25,0) {};
\draw[->] (d1)--(d2)--(d3)--(d4)--(d5)--(d6);
\node[font=\fontsize{5}{5}\selectfont] at (2.25,-0.24) {$b_n^+\!:\!t_n\!+\!1$};
\draw[decorate,decoration={brace,mirror,amplitude=3pt}]
(-0.08,-0.48) -- (2.33,-0.48)
node[midway,below=4pt] {$I_n$};
\end{scope}

\draw[->,red,bend left=32] (a1) to (b2);
\draw[->,red,bend left=25] (a3) to (c4);
\draw[->,red,bend left=15] (a5) to (d6);

\node at (6.25,-1.6)
{Figure 2. Example case of $I_0\prec U$ and $R=\ge$, with $b_j^+=v_{j,t_j+1}$, yielding $F_{\le,n}(\pi)$.};

\end{tikzpicture}%
}
\end{center}

Secondly suppose that $U\prec I_0$. If $R\in\{=,\ge,\le\}$,~\eqref{eq:cross-arcs} and~\eqref{eq:permutation-encoding} give $F_{=,n}(\pi)$, $F_{\ge,n}(\pi)$, or $F_{\le,n}(\pi)$ directly. If $R$ is $\ne$, exchanging the roles of the two sequences gives $F_{=,n}(\pi^{-1})$. The eight cases are summarized in Table~\ref{tab:extraction}.

\begin{table}[H]
\centering
\small
\renewcommand{\arraystretch}{1.12}
\begin{tabular}{ccccc}
\toprule
Position & $=$ & $\ne$ & $\le$ & $\ge$ \\
\midrule
$I_0\prec U$ & $F_{=,n}(\pi^{-1})$ & $F_{=,n}(\pi)$ & $F_{\ge,n}(\pi)$ & $F_{\le,n}(\pi)$ \\
$U\prec I_0$ & $F_{=,n}(\pi)$ & $F_{=,n}(\pi^{-1})$ & $F_{\le,n}(\pi)$ & $F_{\ge,n}(\pi)$ \\
\bottomrule
\end{tabular}
\caption{The permutation-encoding tournament extracted in each of the eight remaining cases.}
\label{tab:extraction}
\end{table}
Note that in the two cases where the construction gives $F_{=,n}(\pi^{-1})$, we use the fact that inversion is a bijection of $\mathfrak S_n$. Thus, as $\pi$ ranges over $\mathfrak S_n$, the tournament $\Inv(G,\prec)$ contains an induced copy of $F_{Q,n}(\pi)$ for every $\pi\in\mathfrak S_n$, proving the claim.
\end{proof}

\begin{proof}[Proof of Theorem~\ref{thm:transfer}]
Let
\[
\cC_{k,r}:=\{(T^{\prec},\prec):\tww(T)\le k,\ \omega(T^{\prec})\le r\}.
\]
It is clear that this class is hereditary. Assume for contradiction that $\cC_{k,r}$ has unbounded twin-width. By Lemma~\ref{lem:semigrid}, it contains arbitrarily large regular square semigrids. Since there are only $256$ schemes, one scheme occurs for arbitrarily large values of $n$. A regular semigrid of a fixed scheme contains every smaller regular semigrid of that scheme as an induced ordered subgraph (see~\cite[Section~4]{ST}); hence heredity implies that $\cC_{k,r}$ contains the regular $n\times n$ semigrid of this fixed scheme for every $n$.

Fix $n>r$ and let $(G,\prec)\in\cC_{k,r}$ be the regular $n\times n$ semigrid of this scheme. Lemma~\ref{lem:extract} gives a fixed $Q\in\{=,\le,\ge\}$, depending only on the scheme, such that $\Inv(G,\prec)$ contains an induced copy of $F_{Q,n}(\pi)$ for every $\pi\in\mathfrak S_n$. Since $(G,\prec)\in\cC_{k,r}$, we have $\tww(\Inv(G,\prec))\le k$ and hence
\[
\tww(F_{Q,n}(\pi))\le k.
\]
This holds for every $n>r$ and every $\pi\in\mathfrak S_n$. So all sufficiently large generators of $\cF_Q$ have twin-width at most $k$, which gives that $\cF_Q$ has bounded twin-width and contradicts Lemma~\ref{lem:obstruction}. Therefore $\cC_{k,r}$ has bounded twin-width.
\end{proof}

\begin{proof}[Proof of Corollary~\ref{cor:main}]
Fix $T$ with $\tww(T)\le k$, put $r:=\domega(T)$, and choose an order $\prec$ with
\[
\omega(T^{\prec})=r.
\]
By Theorem~\ref{thm:transfer}, we have $\tww(T^{\prec},\prec)\le h(k,r)$ and thus
\[\tww(T^{\prec})\le h(k,r).\] Lemma~\ref{lem:chi} therefore gives $\chi(T^{\prec})\le p_{h(k,r)}(r)$ and thus
\[
\dchi(T)\le \chi(T^{\prec})\le p_{h(k,r)}(r).
\]
\end{proof}

\section{Concluding remarks}
Theorem~\ref{thm:transfer} is the core result of this paper: it gives a transfer bound for every order whose backward graph has bounded clique number. Corollary~\ref{cor:main} confirms Conjecture 3.13 of Aboulker, Aubian, Charbit, and Lopes by showing that the class of tournaments of bounded twin-width is $\dchi$-bounded. 

After completing this manuscript, we became aware of two other proofs. First, Theorem~\ref{thm:transfer} has a following short proof. By Geniet and Thomass\'e~\cite{GT}, there are at most $c_k^n$ isomorphism types of $n$ vertex tournaments of twin-width at most $k$. By Xing~\cite{Xing}, for every $r$, each $n$ vertex tournament admits at most $d_r^n$ orderings whose backward graph is $K_{r+1}$-free. Consequently, for fixed $k$ and $r$, the hereditary class of ordered graphs $(T^{\prec},\prec)$ satisfying $\tww(T)\le k$ and $\omega(T^{\prec})\le r$ has single exponential growth. The growth characterization of Simon and Toru\'nczyk~\cite{ST} then shows that this class has bounded twin-width, proving Theorem~\ref{thm:transfer}. Secondly, Aubian and Coulomb~\cite{AC} independently proved(or noted, as they would said) Corollary~\ref{cor:main}. Their proof uses the fact given in \cite{GT} that every bounded-twin-width class of tournaments excludes a fixed tournament whose backedge graph is a matching. Moreover, Briański, Davies, and Walczak~\cite{BDW} proved that ordered graphs excluding a fixed ordered matching as an induced subgraph are $\chi$-bounded. Together, these imply the $\vec\chi$-boundedness.

Our theorem does not resolve the stronger Conjecture 3.14 from~\cite{AACL}, since our bound on $\tww(T^{\prec},\prec)$ depends on both $\tww(T)$ and the clique number of the chosen backward graph. Two natural next problems are to obtain polynomial bounds for $h(k,r)$, and to seek an algorithmic version that produces useful orderings and dicolorings. It would also be interesting to understand whether the semigrid obstruction argument extends to broader classes of oriented graphs.

\section*{Acknowledgments}
We thank Pierre Aboulker and Samuel Coulomb for pointing out the proof in~\cite{AC} and helpful discussions about this problem.

\section*{Declaration of AI}
ChatGPT-5.6 sol, developed by OpenAI, was used during the early stages of this project and in the preparation of the manuscript. In particular, the core idea underlying the proof of the main theorem was proposed by ChatGPT.
ChatGPT was also used to assist with the language, organization, and presentation of the manuscript. The authors subsequently developed and independently verified all mathematical arguments and take full responsibility for all statements, proofs, citations, and conclusions presented in this paper.


\begin{thebibliography}{99}

\bibitem{NeumannLara}
V. Neumann-Lara,
\emph{The dichromatic number of a digraph},
J. Combin. Theory Ser. B 33 (1982), 265--270.

\bibitem{BCCHFLSST}
E. Berger, K. Choromanski, M. Chudnovsky, J. Fox, M. Loebl,
A. Scott, P. Seymour, and S. Thomass\'e,
\emph{Tournaments and colouring},
J. Combin. Theory Ser. B 103 (2013), 1--20.

\bibitem{CSSS}
M. Chudnovsky, A. Scott, P. Seymour, and S. Spirkl,
\emph{Pure pairs. X. Tournaments and the strong Erd\H{o}s--Hajnal property},
European J. Combin. 115 (2024), 103786.

\bibitem{AACL}
P. Aboulker, G. Aubian, P. Charbit, and R. Lopes,
\emph{Clique number of tournaments},
arXiv:2310.04265v2, 2026.

\bibitem{Kim}
I. Kim,
\emph{On containment relations in directed graphs},
Ph.D. thesis, Princeton University, 2013.

\bibitem{NSS}
T. Nguyen, A. Scott, and P. Seymour,
\emph{Some results and problems on tournament structure},
Journal of Combinatorial Theory, Series B 173 (2025), 146--183.

\bibitem{ST}
P. Simon and S. Toru\'nczyk,
\emph{Ordered graphs of bounded twin-width},
arXiv:2102.06881, 2021.

\bibitem{GT}
C. Geniet and S. Thomass\'e,
\emph{First Order Logic and Twin-Width in Tournaments and Dense Oriented Graphs},
European Journal of Combinatorics 132 (2026), 104247;
arXiv:2207.07683v5.

\bibitem{BT}
R. Bourneuf and S. Thomass\'e,
\emph{Bounded twin-width graphs are polynomially $\chi$-bounded},
Advances in Combinatorics 2025:2 (2025), 19 pp.

\bibitem{BCTT}
R. Bourneuf, J. Cocquet, C. Tang, and S. Thomass\'e,
\emph{A polynomial-time approximation algorithm for complete interval minors},
in \emph{Approximation, Randomization, and Combinatorial Optimization. Algorithms and Techniques (APPROX/RANDOM 2025)},
Leibniz International Proceedings in Informatics 353 (2025), 15:1--15:23.

\bibitem{BKTW}
\'E. Bonnet, E. J. Kim, S. Thomass\'e, and R. Watrigant,
\emph{Twin-width I: tractable FO model checking},
Journal of the ACM 69(1) (2022), Article 3.

\bibitem{BGKTW}
\'E. Bonnet, C. Geniet, E. J. Kim, S. Thomass\'e, and R. Watrigant,
\emph{Twin-width III: Max Independent Set, Min Dominating Set, and Coloring},
SIAM Journal on Computing 53(5) (2024), 1602--1640.

\bibitem{PS}
M. Pilipczuk and M. Soko{\l}owski,
\emph{Graphs of bounded twin-width are quasi-polynomially $\chi$-bounded},
Journal of Combinatorial Theory, Series B 161 (2023), 382--406.

\bibitem{Xing}
Y. Xing,
\emph{Excluded structures for pinch-graphic matroids and tournaments},
Master's thesis, University of Waterloo, 2026.

\bibitem{BDW}
M. Briański, J. Davies, and B. Walczak,
\emph{Colouring graphs without an induced ordered matching},
In preparation.

\bibitem{AC}
G. Aubian and S. Coulomb,
\emph{Clique number of tournaments II},
to appear.

\end{thebibliography}
\end{document}